\documentclass[12pt, A4]{article}
\usepackage{amssymb}
\usepackage{amsmath,amsfonts,amsthm,amstext}
\usepackage[english]{babel}
\usepackage[latin1]{inputenc}
\usepackage{makeidx}
\usepackage{amscd}
\usepackage{graphicx}
\usepackage{fancybox}
\usepackage{niceframe}
\usepackage{times}
\usepackage{mathrsfs}
\usepackage{bbm}
\usepackage[all]{xy}
\usepackage{fancyhdr}
\usepackage{sectsty}
\usepackage[Conny]{fncychap}
\usepackage[symbol]{footmisc}
\usepackage[T1]{fontenc}
\DefineFNsymbols{stars}[text]{* {**} {*} {\S} {\I}}
\setfnsymbol{stars}

\newtheorem{thm}{Theorem}[section]
\newtheorem{cor}[thm]{Corollary}
\newtheorem{pro}[thm]{Proposition}
\newtheorem{deff}[thm]{Definition}
\newtheorem{lem}[thm]{Lemma}
\newtheorem{rem}[thm]{Remark}

\newcommand{\nc}{\newcommand}

\nc{\cc}{\D{C}} \nc{\hh}{\D{H}} \nc{\nn}{\D{N}} \nc{\oo}{\D{O}}
\nc{\qq}{\D{Q}}
 \nc{\rr}{\D{R}}
\nc{\zz}{\D{Z}} \nc{\livre}{\ast}

\def\G{{\cal G}}

\def\dbigcup{\mathinner{\bigcup \mkern -13.2mu \rlap{\raise 0.6ex\hbox{.}}\mkern 14.9mu}}

\nc{\barr}{\begin{array}} \nc{\earr}{\end{array}}
\nc{\bthm}{\begin{thm}} \nc{\ethm}{\end{thm}}
\nc{\bpro}{\begin{pro}} \nc{\epro}{\end{pro}}
\nc{\blem}{\begin{lem}} \nc{\elem}{\end{lem}}
\nc{\bins}{\begin{ins}} \nc{\eins}{\end{ins}}
\nc{\bcor}{\begin{cor}} \nc{\ecor}{\end{cor}}
\nc{\brem}{\begin{rem}} \nc{\erem}{\end{rem}}
\nc{\bdeff}{\begin{deff}} \nc{\edeff}{\end{deff}}
\nc{\bea}{\begin{eqnarray}} \nc{\eea}{\end{eqnarray}}
\nc{\D}[1]{{\mathbb#1}}
\def\R{\rm I\kern -.2em R}
\def\N{\rm I\kern -.18em N}
\def\Z{\rm Z\kern -.332em Z}
\def\de{\rm [\kern -.15em [}
\def\dd{\rm ]\kern -.15em ]}
\def\||{\hspace{0.15cm}|\hspace{0.15cm}}

\title{The Fundamental group of a finite graph of conjugacy separable groups with finite edge groups is conjugacy separable}

\author{S. C. Chagas\footnote{\vspace*{-.5cm} The author was supported by
CNPq.}
  }

\begin{document}
\maketitle
\begin{abstract} The main objective of this paper  is to give a positive answer to the natural question proposed by  Ashot Minasyan: Is the fundamental group of finite graph of conjugacy separable groups with  finite edge groups  conjugacy separable?
\end{abstract}

\medskip
2000 Mathematics Subject Classification: 20E06, 20E08, 20E18,
20E26, 20E45

\section{Introduction}
\renewcommand{\thefootnote}{\arabic{footnote}}
 \setcounter{footnote}{1}
 A group $G$ is conjugacy separable if whenever $x$ an $y$ are
  non-conjugate elements of $G$, there is a finite homomorphic image of  $G$ in which the images of $x$ and $y$ are
  non-conjugate. 
  
  The notion of the conjugacy separability owes its importance
  to the fact, first pointed out by Mal'cev \cite{M}, that the conjugacy problem has
  a positive solution in finitely presented conjugacy separable groups.

The notion  was introduced by Blackburn \cite{Black} who showed that
finitely generated torsion-free nilpotent groups are conjugate separable.
This was extended to supersolvable groups by Kargapolov \cite{Kargapolov}, and to
finitely generated nilpotent-by-finite groups by Toh.  Independentely Formanek (1975) and Remeslennikov \cite{Rem} 
proved the conjugacy separability of polycyclic groups (both proofs are based on a number theoretic result of  Chevalley (C. CHEVALLEY, Deux thkorknes d'arithmktique, J. Math. Sot. Japan 3 (1951),
36-44). 

The most significant relatively recent results in the diretion is the proof of the conjugacy separability of the 
fundamental groups of 3-manifolds by Hamilton, Wilton, Zalesskii \cite{HWZ13}  and of  right angled Artin groups by   
Minasyan  \cite{Minasyan2012}. The latter result was used  by Minasyan and  Zalesskii  \cite{MinasyanZalesskii2015}  to  show that all hyperbolic virtually compact special groups (in the sense of D.Wise) are conjugacy separable.

The main result of this paper  is to answer the natural question proposed by  Ashot Minasyan: Is the fundamental group of a finite graph of conjugacy separable groups with finite edges groups conjugacy separable? 

\begin{thm}\label{thm:principal}
The fundamental  group of a finite graph of  conjugacy separable groups  with  finite edges groups is conjugacy separable.
\end{thm}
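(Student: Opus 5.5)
We argue in the profinite setting: $G=\pi_1(\mathcal{G},\Gamma)$ and $\widehat G=\Pi_1(\widehat{\mathcal{G}},\Gamma)$, the profinite fundamental group of the graph of profinite completions $\widehat{G}_v$. Conjugacy separability of $G$ is equivalent to the statement that two elements $x,y\in G$ conjugate in $\widehat G$ are already conjugate in $G$.

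\emph{Reductions.} Since edge groups are finite and vertex groups are residually finite (conjugacy separable implies residually finite), $G$ is residually finite. Moreover, for any finite-index normal subgroups $U_v\trianglelefteq G_v$ meeting the edge groups trivially, there is a finite quotient of $G$ restricting to $G_v/U_v$ on each vertex group. Indeed, one forms the graph of finite groups $G_v/U_v$ with the same finite edge groups; its fundamental group is virtually free, hence residually finite. From this we get:
\begin{itemize}
\item each $\widehat G_v$ embeds in $\widehat G$;
\item the profinite topology of $G$ induces the full profinite topology on each $G_v$;
\item $\widehat G$ acts on its standard profinite tree $S$, with vertex stabilizers the conjugates of $\widehat G_v$ and edge stabilizers the conjugates of the finite groups $G_e$.
\end{itemize}

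\emph{Elliptic case.} Suppose $x$ is conjugate into some $G_v$; after conjugating in $G$, assume $x\in G_v$. If $x$ and $y$ are conjugate in $\widehat G$, then $y$ fixes a vertex of $S$. Using the correspondence between elliptic elements of $G$ and of $\widehat G$ (an element of $G$ fixing a vertex of $S$ fixes a vertex of the abstract tree $T\subset S$), we may take $y\in G_w$.
\begin{itemize}
\item If $w=v$, we have $\gamma^{-1}x\gamma=y$ with $\gamma\in\widehat G$. Then $\gamma$ maps the fixed subtree of $y$ onto that of $x$.
\item If $x$ fixes only the vertex $v$, this forces $\gamma\in\widehat G_v$, and conjugacy separability of $G_v$ gives conjugacy in $G_v$.
\item If $x$ fixes an edge, then $x$ is conjugate into a finite edge group. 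Since there are finitely many edge groups, each finite, there are only finitely many $G$-conjugacy classes of such elements. Finitely many classes in $G$ are separated from one another in a finite quotient, by residual finiteness together with finiteness of the edge groups and conjugacy separability of the vertex groups. This reduces the problem to checking finitely many candidates.
\item If $w\ne v$, the same fixed-tree argument shows $x$ fixes a path, hence lies in an edge conjugate. This is handled as in the previous item.
\end{itemize}

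\emph{Hyperbolic case.} Suppose $x$ is not conjugate into any vertex group. Cyclically reduce $x$ and $y$, so that each acts on $T$ as a hyperbolic element with an axis. If they are conjugate in $\widehat G$, their closures act on $S$ with profinite axes, and $\gamma$ sends one axis to the other. The plan is to work in finite quotients: choose $U_v$ deep enough in each $G_v$ that the reduced words (normal forms) of $x$ and $y$ survive in the quotient graph of finite groups. In the virtually free quotient, $x$ and $y$ are non-conjugate whenever their cyclic normal forms are inequivalent up to cyclic permutation and edge-group adjustments. Since there are finitely many syllables and finite edge groups, there are only finitely many such possible equivalences to rule out. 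Each one fails because of a non-conjugacy, or non-equality modulo finite double cosets $G_e g G_{e'}$, inside some vertex group. These conditions are separable in $G_v$: double cosets of finite subgroups are closed in conjugacy separable groups, and twisted conjugacy by finite sets is separable. Choosing all $U_v$ accordingly and intersecting, the images of $x$ and $y$ are non-conjugate. Virtually free groups are conjugacy separable (Dyer), which finishes the argument.

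\emph{Main obstacle.} The hard step is the hyperbolic case. The difficulty is showing that conjugacy in the quotient graph of finite groups really reduces to a finite list of separable conditions on the vertex syllables. These are conditions of the form ``$g_1 \in G_{e}\, g_2\, G_{e'}$'' and ``$g$ is conjugate to $h$ by an element of a finite edge group''. The worry is that the quotient map could create new coincidences. I would first check that finite subgroups and their double cosets are separable in conjugacy separable groups, and then carefully apply the conjugacy criterion for amalgams and HNN extensions (Collins-type) in the quotient.
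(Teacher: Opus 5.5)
Your route differs from the paper's mainly in the hyperbolic case. There the paper works inside $\widehat G$. Since $\overline{T_i}$ is the unique minimal $g_i$-invariant profinite subtree, $\gamma^{-1}\overline{T_1}=\overline{T_2}$. Because $T_2/\langle g_2\rangle=\overline{T_2}/\overline{\langle g_2\rangle}$, one can pick $\delta\in\overline{\langle g_2\rangle}$ (which centralizes $g_2$) so that $\delta\gamma^{-1}$ sends an abstract edge of $T_1$ into $T_2$. Finiteness of edge stabilizers then forces $\delta\gamma^{-1}\in G$, and this element conjugates $g_1$ to $g_2$.

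You instead separate $x,y$ directly in the virtually free quotient $\bar G$ (vertex groups $G_v/U_v$, same edge groups), using the Collins-type criterion and Dyer's theorem. That part is sound. The candidate equivalences are indexed by a cyclic shift and a tuple of edge-group elements, so they form the same finite list in $G$ and in $\bar G$. Each failure in $G$ is a single inequality $h\neq agb$ in some $G_v$, which survives modulo a suitable $U_v$ by residual finiteness alone. The paper's argument is shorter given the profinite Bass--Serre machinery; yours is elementary modulo Dyer, at the cost of normal-form bookkeeping.

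The gap is in the elliptic case when $x$ fixes an edge. Your subcase where $x$ fixes only $v$ follows the paper ($\gamma\in\widehat G_v$, then conjugacy separability of $G_v$). The edge subcase, however, rests on the claim that finitely many classes of $G$ are separated in a finite quotient ``by residual finiteness together with \ldots''. That claim is exactly the content of this case, and the reasons you give do not yield it. Residual finiteness does not separate conjugacy classes. Conjugacy separability of the $G_v$ only controls conjugacy inside each single $G_v$, whereas two edge-group elements can be conjugate in $G$ through a chain running over several vertex groups. (The paper treats this subcase by a separate argument, passing to $N\langle g_1\rangle$ with $N$ of finite index avoiding the edge groups.)

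Your case split also moves between $T$ and $S$ without justification. That $x$ fixes an edge of $S$ only shows that $x$ is conjugate into some $G_e$ by an element of $\widehat G$, not by an element of $G$. Likewise, that $x$ fixes only $v$ in $T$ does not by itself give the same in $S$.

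The simplest repair is to handle the elliptic case in $\bar G$ as well. Let $F_v\subseteq G_v$ be the finite union of the images of the incident edge groups. Use conjugacy separability of $G_v$ to choose $U_v$ so that:
\begin{itemize}
\item $F_v$ maps injectively to $G_v/U_v$;
\item two elements of $F_v$ with conjugate images are already $G_v$-conjugate;
\item if $x$ (or $y$) lies in $G_v$ but is not $G_v$-conjugate into $F_v$, its image is not conjugate into the image of $F_v$;
\item if $x,y\in G_v$ are not $G_v$-conjugate, their images are not conjugate either.
\end{itemize}
Then apply the elliptic conjugacy criterion: in any graph of groups, two vertex-group elements are conjugate iff they are linked by a chain that alternates conjugation inside a vertex group with transport $\partial_0(c)\leftrightarrow\partial_1(c)$ of an edge-group element across an edge. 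This holds because an element fixing two vertices of the Bass--Serre tree fixes the geodesic between them. Every link of such a chain in $\bar G$ lifts to $G$ by the choice of the $U_v$, so $\bar x,\bar y$ are not conjugate in $\bar G$, and Dyer's theorem finishes. Done this way, your proof no longer needs the profinite tree at all.
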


%
%

%

A group G is defined to be hereditarily conjugacy separable if all finite index subgroups of $G$ are conjugacy separable. This concept
is stronger and in some sense more useful than simply conjugacy
separability, in view of many applications, discovered in
\cite{Minasyan2012} and \cite{MM}. For example in \cite{MM}
Minasyan and Martino proved that for finitely generated normal
subgroup $N$ of torsion-free hereditarily conjugacy separable
hyperbolic group $G$ residual finiteness of $G/N$ is equivalent to
conjugacy separability of $N$.

\begin{cor}
	The fundamental group of a finite graph of groups with hereditary conjugacy separable (hsc) vertex groups and finite edge groups is also hsc. 
\end{cor}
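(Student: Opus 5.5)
The plan is to reduce the Corollary directly to Theorem~\ref{thm:principal} using Bass--Serre theory. Let $G=\Pi_1(\mathcal{G},\Gamma)$ be the fundamental group of a finite graph of groups $(\mathcal{G},\Gamma)$ whose vertex groups $G_v$ are hereditarily conjugacy separable and whose edge groups $G_e$ are finite. Let $H\le G$ have finite index. We must show that $H$ is conjugacy separable. Every finite index subgroup of $H$ is again of finite index in $G$, so this is exactly what hereditary conjugacy separability of $G$ requires.

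\textbf{Step 1: $H$ splits as a finite graph of groups.} Let $T$ be the Bass--Serre tree of $(\mathcal{G},\Gamma)$. Its vertices are the cosets $gG_v$ and its edges the cosets $gG_e$. The restriction of the $G$-action on $T$ to $H$ gives an action of $H$ on $T$ without inversions. By the structure theorem of Bass--Serre theory, $H\cong \Pi_1(\mathcal{H},H\backslash T)$. Here the vertex and edge groups are the stabilizers
\[
H\cap gG_vg^{-1} \quad\text{and}\quad H\cap gG_eg^{-1},
\]
one for each chosen representative of an $H$-orbit of vertices or edges of $T$.

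\textbf{Step 2: the quotient graph is finite.} The natural map $H\backslash T\to G\backslash T=\Gamma$ is surjective. The fibre over a vertex $v$ is in bijection with the double coset space $H\backslash G/G_v$, and the fibre over an edge $e$ with $H\backslash G/G_e$. Both are finite because $|G:H|<\infty$. Since $\Gamma$ is finite, $H\backslash T$ is finite.

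\textbf{Step 3: the local hypotheses hold.} Each edge group $H\cap gG_eg^{-1}$ is a subgroup of a conjugate of the finite group $G_e$, so it is finite. Each vertex group $H\cap gG_vg^{-1}$ has finite index in $gG_vg^{-1}$, since $H$ has finite index in $G$. Conjugating by $g^{-1}$, it is isomorphic to the subgroup $g^{-1}Hg\cap G_v$, which has finite index in $G_v$. As $G_v$ is hereditarily conjugacy separable, this subgroup is conjugacy separable. Conjugacy separability is an isomorphism invariant, so every vertex group of $(\mathcal{H},H\backslash T)$ is conjugacy separable.

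\textbf{Step 4: conclusion.} $H$ is the fundamental group of a finite graph of conjugacy separable groups with finite edge groups. By Theorem~\ref{thm:principal}, $H$ is conjugacy separable. Since $H$ was an arbitrary finite index subgroup, $G$ is hereditarily conjugacy separable.

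There is no real obstacle beyond Theorem~\ref{thm:principal} itself. The only points needing care are the finiteness of $H\backslash T$, which follows from the double coset count in Step 2, and checking that the Bass--Serre vertex groups of $H$ are finite index subgroups of conjugates of the original vertex groups, which is done in Step 3.
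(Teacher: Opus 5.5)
Your argument is correct and is the deduction the paper intends. The paper states the corollary without a written proof, treating it as immediate from Theorem~\ref{thm:principal}: a finite-index subgroup acting on the Bass--Serre tree has finite quotient graph, finite edge stabilizers, and vertex stabilizers of finite index in conjugates of the $G_v$, and your Steps 1--4 supply exactly these omitted details.
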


 \section{Preliminaries}

The profinite topology on a group $G$ is the topology where the
collection of all finite index normal subgroups of $G$ serves as a
fundamental system of neighborhoods of the identity element $1\in
G$, turning $G$ into a topological group.  Note that for a
subgroup $H$ of $G$, the profinite topology  of $H$ can be
stronger than the topology induced by the profinite topology of
$G$.

The completion $\widehat{G}$ of $G$ with respect to this topology
is called the profinite completion of $G$ and can be expressed as
an inverse limit
$$\widehat{G} =\lim\limits_{\displaystyle\longleftarrow\atop
N} G/N$$
 of all finite quotients of $G$.
 Thus $\widehat{G}$ is a profinite group. Moreover, there exists a
 natural homomorphism $\iota: G \longrightarrow \widehat{G}$  that
 sends $g \mapsto(gN)$;  $\iota$ is a monomorphism when $G$ is
residually finite. If $S$ is a subset of $\widehat{G}$, we denote
by $\overline{S}$ its closure in $\widehat{G}$. The profinite
topology on $G$ is induced by the topology of $\widehat{G}$.

The next proposition expresses the conjugacy separability property
of $G$ in terms of its profinite topology and we shall use it
freely in the paper.
\begin{pro}
 Let $G$ be a group, then the following conditions are equivalent:
 \begin{itemize}
 \item[(i)] $G$ is conjugacy separable;
  \item[(ii)] for each $x\in G$, the conjugacy class of   $x^{G}$ of
  $x$ is closed in the profinite topology. In particular $G$ is residually finite;
  \item[(iii)] $G$ is residually finite and for each pair of elements $x,
y\in G$ such that $y = x^{\gamma}$,  for some $\gamma\in
\widehat{G}$, there exists $g\in G$ such that $y = x^{g}$.
 \end{itemize}
\end{pro}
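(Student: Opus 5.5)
The plan is to prove the cycle of implications (i)$\Rightarrow$(ii)$\Rightarrow$(i), and then (ii)$\Leftrightarrow$(iii). The argument rests on one elementary translation: for a finite index normal subgroup $N$ of $G$ and $x,y\in G$, the images $xN$ and $yN$ are conjugate in $G/N$ if and only if the coset $yN$ meets $x^{G}$. Indeed, $(xN)^{gN}=yN$ is equivalent to $x^{g}\in yN$.

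For (i)$\Rightarrow$(ii), fix $x$ and take $y\notin x^{G}$. By (i) there is a finite index normal $N$ with $xN,yN$ non-conjugate in $G/N$. By the translation, $yN$ is an open neighbourhood of $y$ disjoint from $x^{G}$, so $x^{G}$ is closed. Applying this with $x=1$ shows that $\{1\}$ is closed, i.e. $\bigcap N=1$, which is residual finiteness. For (ii)$\Rightarrow$(i), if $y\notin x^{G}$, closedness gives a basic neighbourhood $yN$ disjoint from $x^{G}$. The translation then says $xN$ and $yN$ are non-conjugate in the finite quotient $G/N$.

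For (ii)$\Leftrightarrow$(iii) we work in $\widehat G$, using residual finiteness (available in both directions) so that $\iota$ is injective. We then identify $G$ with its image, which carries the subspace topology from $\widehat G$, as noted above. The key claim is that the closure of $x^{G}$ in $\widehat G$ is exactly $x^{\widehat G}$. The set $x^{\widehat G}$ is closed, being the image of the compact space $\widehat G$ under the continuous map $\gamma\mapsto x^{\gamma}$ into a Hausdorff space. It contains $x^{G}$, and $x^{G}$ is dense in it because $G$ is dense in $\widehat G$ and the map is continuous.

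Since the topology on $G$ is induced from $\widehat G$, the closure of $x^{G}$ in $G$ is $G\cap\overline{x^{G}}=G\cap x^{\widehat G}$. Therefore $x^{G}$ is closed in $G$ precisely when $G\cap x^{\widehat G}=x^{G}$. This says exactly that any $y\in G$ of the form $x^{\gamma}$ with $\gamma\in\widehat G$ is already $x^{g}$ for some $g\in G$, which is (iii).

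The only point needing care is this identification of closures, specifically that the profinite topology of $G$ coincides with the topology induced from $\widehat G$. That holds because the closures $\overline{N}$ of finite index normal subgroups are open in $\widehat G$ with $\overline N\cap G=N$. The remaining steps are routine.
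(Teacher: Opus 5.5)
The paper gives no proof of this proposition. It is quoted as a standard fact (``we shall use it freely''), so there is no argument of the author's to compare with. Your proof is correct and is the usual one. The coset criterion ($xN$ and $yN$ are conjugate in $G/N$ if and only if $yN\cap x^{G}\neq\emptyset$) handles (i)$\Leftrightarrow$(ii). The identity $\overline{x^{G}}=x^{\widehat G}$, obtained from compactness plus density, together with the fact that $G$ carries the topology induced from $\widehat G$, handles (ii)$\Leftrightarrow$(iii).

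One small point of bookkeeping concerns the topology identification. The reason you give is that each $\overline N$ is open in $\widehat G$ with $\overline N\cap G=N$. That only shows the profinite topology of $G$ is contained in the induced one, which is the inclusion used in (ii)$\Rightarrow$(iii). For (iii)$\Rightarrow$(ii) you pass from ``$x^{G}=G\cap x^{\widehat G}$ is closed in the induced topology'' to ``$x^{G}$ is closed in the profinite topology''. That step needs the reverse inclusion: for every open normal subgroup $U$ of $\widehat G$, the subgroup $U\cap G$ has finite index in $G$. Equivalently, the $\overline N$ form a base of neighbourhoods of $1$ in $\widehat G$. This holds at once, since $U\cap G$ is the kernel of the composite $G\to\widehat G\to\widehat G/U$ into a finite group. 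The paper also asserts the identification explicitly, so this is not a gap, but your stated justification should cover both directions.
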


We shall recall now basic notions of the profinite verstion of Bass-Serre theory of groups acting on trees (see \cite{R-17} for details).

\medskip
Our graphs are oriented  graphs. A graph $\Gamma$ is a set together with
a distinguished subset of {\it vertices} $V= V(\Gamma)$ and together
with two maps $d_0, d_1: \Gamma\longrightarrow V$, which are the
identity when restricted to $V$. This graph is called {\it
	profinite} if $\Gamma$ is a profinite  space (i.e., a compact,
Hausdorff and totally-disconnected topological space), $V$ is a
closed subset of $\Gamma$, and the mappings $d_i$ are continuous.
If $e\in \Gamma$, we say that $d_0(e)$ and $d_1(e)$ are the origin
and terminal vertex of $e$, respectively.  The complement $E=
E(\Gamma)= \Gamma-V(\Gamma)$ of $V(\Gamma)$ in $\Gamma$ is called
the set (space) of {\it edges} of $\Gamma$. For basic concepts
such as connectedness, or of when a graph is a tree, see
\cite[Chapter I]{DD-89}, or   \cite[Part I]{Serre},   for
abstract graphs. We assume that the reader is familiar with basic notions of Bass-Serre theory of groups acting on trees treated in these books. 

We also assume that the reader knows basic facts about profinite groups, in particular the notion of the profinite topology on a group that can be found in \cite[Chapter 3]{RZ-10}. Following the tradition of combinatorial group theory  a subgroup  $H$ of a group $G$ will be called {\it separable}  if it is closed in the profinite topology of $G$.

For a profinite space $X$  that is the inverse limit of finite discrete spaces $X_j$, $[[\widehat{\mathbb{Z}} X]]$ is the inverse limit  of $\widehat 
{[\mathbb{Z}}X_j]$, where $[\widehat{\mathbb{Z}} X_j]$ is the free $\widehat{\mathbb{Z}}$-module  with basis $X_j$. For a pointed profinite space $(X, *)$
that is the inverse limit of pointed finite discrete spaces $(X_j, *)$, $[[\widehat{\mathbb{Z}} (X, *)]]$ is the inverse limit  of $
[\widehat{\mathbb{Z}} (X_j, *)]$, where $[\widehat{\mathbb{Z}} (X_j, *)]$ is the free $\widehat{\mathbb{Z}}$-module  with basis
$X_j \setminus \{ * \}$ \cite[Chapter~5.2]{RZ-10}.

Given a profinite graph $\Gamma$ define the pointed space $(E^*(\Gamma), *)$ as  $\Gamma / V(\Gamma)$ with the image of $V(\Gamma)$ as a distinguished point $*$.
By definition  a profinite tree  $\Gamma$ is a profinite graph with a short exact sequence
$$
0 \to [[\widehat{\mathbb{Z}}(E^*(\Gamma), *)]] \overset{\delta}{\rightarrow} [[\widehat{\mathbb{Z}} V(\Gamma)]] \overset{\epsilon}{\rightarrow} \widehat{\mathbb{Z}} \to 0
,$$
where $\delta(\bar{e}) = d_1(e) - d_0(e)$ for every $e \in E(\Gamma)$, $\bar{e}$ the image of $e$ in $E^*(\Gamma)$ and $\epsilon(v) = 1$ for
every $v \in V(\Gamma)$.

We refer for further details of the profinite version of the Bass-Serre theory to  \cite{R-17}.
If $v$  and $w$ are vertices of a tree  (respectively, of a profinite tree)
$\Gamma$, we denote by $[v,w]$ the smallest subtree  (respectively, a
profinite subtree) of $\Gamma$ containing $v$ and $w$.

A group $H$ is said to act on a graph $\Gamma$ if it acts on $\Gamma$ as a set and if in
addition $d_i(hm)= hd_i(m)$, for all $h\in H$  and $m\in \Gamma$  ($i=0,1$); if $\Gamma$
is a profinite graph and $H$ a profinite group, we assume that the action is continuous.
The quotient $ \Gamma/H$  inherits a natural graph structure (respectively, profinite graph structure).

\medskip 

When we say that $({\cal G}, \Delta)$ is a finite graph of profinite groups we mean that it contains the data of the underlying finite graph, the profinite edge groups, the  profinite vertex groups and the attaching continuous maps. More precisely,
let $\Delta$ be a connected finite graph. A    graph of profinite groups $({\cal G},\Delta)$ over $\Delta$ consists of a specifying profinite group ${\cal G}(m)$ for each $m\in \Delta$, and continuous monomorphisms $\partial_i: {\cal G}(e)\longrightarrow {\cal G}(d_i(e))$ for each edge $e\in E(\Delta)$. The  fundamental group $$\Pi= \Pi_1({\cal G},\Delta)$$ of the finite graph of profinite groups $({\cal G},\Delta)$ is defined by means of a universal property: $\Pi$ is a profinite group together with the following data  and conditions:
\begin{enumerate}
	\item [(i)] a maximal subtree $T$ of $\Delta$;
	\smallskip
	\item [(ii)]  a collection of continuous homomorphisms
	$$\nu_m: {\cal G}(m)\longrightarrow \Pi\quad (m\in \Delta), $$
	and     a continuous  map
	$E(\Delta) \longrightarrow  \Pi$, denoted $e\mapsto t_e$  ($e\in E(\Delta)$), such that
	$t_e=1$, if $e\in E(T)$, and
	$$(\nu_{d_0 (e)}\partial_0)(x)= t_e(\nu_{d_1 (e)}\partial_1)(x)t_e^{-1},\quad  \forall x\in {\cal G}(e), \ e\in E(\Delta); $$
	\smallskip
	\item [(iii)]  the following universal property is satisfied:
	\medskip
	\noindent whenever one has the following data
	\begin{itemize}
		\item $H$ is a profinite group,\\
		\item $\beta_m: {\cal G}(m)\longrightarrow \Pi\quad (m\in \Delta)$
		a collection of continuous homomorphisms,\\
		\item a map $e\mapsto s_e$ ($e\in E(\Delta)$)  with $s_e=1$, if
		$e\in E(T)$, and\\
		\item $(\beta_{d_0 (e)}\partial_0)(x)= s_e(\beta_{d_1
			(e)}\partial_1)(x)s_e^{-1}, \forall x\in {\cal G}(e), \ e\in
		E(\Delta),  $\\
		\smallskip
		\noindent then there exists a unique continuous homomorphism $\delta : \Pi\longrightarrow  H$ such that $\delta(t_e)= s_e$
		$(e\in E(\Delta))$, and for each $m\in\Delta$ the diagram
	\end{itemize}
	
	\medskip
	
	$$\xymatrix{&
		\Pi  \ar[dd]^\delta   \\  {\cal G}(m)  \ar[ru]^{\nu_m}
		\ar[rd]_{\beta_m }\\ &H }$$
	
	\medskip
	\noindent commutes.
\end{enumerate}

In \cite[paragraph (3.3)]{ZM},  the fundamental group
$\Pi$ is  defined explicitly in terms of generators and relations.  It is also proved there
that the definition given above is independent of the choice of
the maximal subtree $T$.
We use the notation $\Pi(m) = {\rm Im}(\nu_m)$.

Associated with the graph of groups $({\cal G}, \Delta)$ there is
a corresponding  {\it standard profinite graph} (or universal covering graph)
$S=S(\Pi)=\dbigcup
\Pi/\Pi(m)$.  The vertices of
$S$ are those cosets of the form
$g\Pi(v)$, with $v\in V(\Delta)$
and $g\in \Pi$; the incidence maps of $S$ are given by the formulas:

$$d_0 (g\Pi(e))= g\Pi(d_0(e)); \quad  d_1(g\Pi(e))=gt_e\Pi(d_1(e)) \,  (\,e\in E(\Delta)).  $$

In fact $S$  is a profinite tree (cf. \cite[Theorem 3.8]{ZM}.
There is a natural  action of
$\Pi$ on $S$, and clearly $ S/\Pi= \Delta$.

\begin{rem}\label{completion}
	If $\pi_1(\G,\Gamma)$ is the fundamental group of a finite graph of groups then one has the induced graph of profinite completions of edge  and vertex groups $(\widehat\G,\Gamma)$ and  a natural homomorphism $\Pi=\pi_1(\G,\Gamma)\longrightarrow \Pi_1(\widehat\G,\Gamma)$. It is an embedding if $\pi_1(\G,\Gamma)$ is residually finite. In this case $\Pi_1(\widehat \G,\Gamma)=\widehat{\pi_1(\G,\Gamma)}$ is simply the profinite completion. 
	Moreover, 
	
	\begin{enumerate}
		\item[(i)]  The tree $S(\Pi)$
		naturally embeds in $S(\widehat\Pi)$ if and only if  the edge and vertex groups $\G(e)$, $\G(v)$     are separable in  $\pi_1(\G,\Gamma)$, or equivalently $\G(e)$ are closed in $\G(d_0(e))$,   $\G(d_1(v))$    with
		respect to the topology induced by the profinite topology on $\Pi$ (see
		\cite[Proposition 2.5]{CB-13}). In this case, $S(\Pi)/\Pi = S(\widehat{\Pi})/\widehat{\Pi} = \Delta$.
		
		\item[(ii)]  If $H$ is an infinite finitely generated subgroup of $\Pi$ then  by   combination of Theorem 4.12 and Proposition 4.13  of Chapter 1 in \cite{DD-89} there exists a minimal $H$-invariant subtree $T_H$ of $S(\Pi)$ and 
		it is unique. Moreover, $T_H/H$ is finite.
		
		\item[(iii)]  If  $S(\Pi)$
		naturally embeds in $S(\widehat \Pi)$,  the closure $\overline T_H$ in $S(\widehat \Pi)$ is a $\overline H$-invariant profinite subtree and by  \cite[Lemma 1.5]{RZ-10} contains a unique (in $S(\widehat \Pi)$) minimal $\overline H$-invariant subtree  $\widehat T_{\overline H}$. Moreover,  $\widehat T_{\overline H}/\overline H$ is finite since it is a subgraph of a quotient graph $\overline T_H/\overline H$ of  the finite graph $T_H/H$.
	\end{enumerate}
	
\end{rem}

\begin{lem}\label{Pro831Ribes}[Proposition 8.3.1 \cite{R-17}]  Within the hypotheses of Remark \ref{completion}. Let $b\in \Pi^{abs}$ be a hyperbolic  element of $\Pi^{abs}$ and let  $L_b$ be  the corresponding Tits line. Then the following assertions hold.
\begin{itemize}
	\item[(a)] $\langle b^n\rangle \backslash L_b = \overline{\langle b^n\rangle}  \backslash \overline{L_b}$, for all natural numbers $n=1, 2, \cdots$ 
	\item[(b)] $\overline{L_b}$ is the unique minimal $\langle b^n\rangle$-invariant profinite subtree of $S$ and $\overline{L_b}\cap S^{abs}= L_b$.
	\item[(c)] If $\beta \in  \overline{\langle b\rangle}$ and $\beta w\in L_{b}$ for some $w\in L_b$, then $\beta\in \langle b\rangle$.
	\item[(d)] If $\beta \in  \overline{\langle b\rangle} \setminus \langle b\rangle$, then $\beta L_{b}\cap S^{abs} = \emptyset$.
	\item[(e)] Let $\{ \beta_{\lambda}\,|\, \lambda\in \Lambda\}$ be a complete set of  representatives of the cosets of $\langle b\rangle$ in $\overline{\langle b\rangle}$ (a transversal). Then $\overline{L_b}= \underset{\lambda\in\Lambda}{\cup}\beta_{\lambda}L_b$.
In other words, the abstract graphs $\beta_{\lambda}L_b$ are  the distinct connected components of $\overline{L_b}$ considered as an abstract graph; in particular, $L_b$ is its own connected  component in $\overline{L_b}$ as an abstract graph.
\item[(f)] Let $N=\{ x\in \Pi^{abs}\,|\, L_b= L_b\}$. Then $N$ is closed in the profinite topology of $\Pi^{abs}$.
	\end{itemize}
\end{lem}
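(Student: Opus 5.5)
The plan is to work throughout with a compact fundamental domain for the action of $\langle b\rangle$ on the Tits line. Since $b$ is hyperbolic in $\Pi^{abs}$, the line $L_b$ is its axis in $S^{abs}=S(\Pi)$, and $b$ acts on it by a translation of length $\ell\geq 1$. For each $n$ I choose a finite segment $F_n\subset L_b$ consisting of $n\ell$ consecutive edges together with their vertices, so that $L_b=\langle b^n\rangle F_n$ and $\langle b^n\rangle\backslash L_b$ is a cycle of length $n\ell$. Because $\overline{\langle b^n\rangle}$ is compact and $F_n$ is finite, the set $\overline{\langle b^n\rangle}F_n$ is closed in $S$, contains $L_b$, and is contained in $\overline{L_b}$. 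Hence
$$\overline{L_b}=\overline{\langle b^n\rangle}\,F_n \qquad\text{for every } n\geq 1.$$
By Remark \ref{completion}(i), $S^{abs}$ embeds in $S$, so all of this makes sense inside $S$. This identity already gives surjectivity of the natural map $\langle b^n\rangle\backslash L_b\to\overline{\langle b^n\rangle}\backslash\overline{L_b}$ in (a). It also gives $\overline{L_b}=\bigcup_\lambda \beta_\lambda L_b$ in (e).

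The heart of the argument is injectivity in (a), and I would prove it in the following form: if $\beta\in\overline{\langle b\rangle}$ and $\beta m=m'$ with $m,m'\in L_b$, then $\beta\in\langle b\rangle$. This is exactly (c), and (a) follows by applying it to $b^n$. Reducing to edges of $L_b$ suffices, since vertices of $L_b$ are endpoints of edges there. Given $\beta m=m'$, the points $m,m'$ lie in the same $\widehat\Pi$-orbit. Since $S^{abs}/\Pi=S/\widehat\Pi=\Delta$, they also lie in the same $\Pi$-orbit, so $m'=gm$ with $g\in\Pi$. Then $\beta\in g\,\overline{\Pi_m}\cap\overline{\langle b\rangle}$.

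The edge stabilizer $\Pi_m$ is a conjugate of an edge group, hence finite, so $\overline{\Pi_m}=\Pi_m$. Thus $\beta$ lies in the closure of $\langle b\rangle$ and in the finite set $g\Pi_m$. It remains to see that the finitely many elements of $g\Pi_m$ lying outside $\langle b\rangle$ are not in $\overline{\langle b\rangle}$. Equivalently, I need that $\langle b\rangle$ is separable in $\Pi$, or at least that the finite set $g\Pi_m\setminus\langle b\rangle$ can be separated from $\langle b\rangle$ in a finite quotient.

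\textbf{This is the step I expect to be the main obstacle.} I would try to obtain it from the hyperbolicity of $b$. Elements of $g\Pi_m$ move $m$ to $m'$, while $b^k$ moves $m$ along the axis. So I would try to pass to a finite quotient $\Pi\to\Pi/U$ in which the image of the finite subgraph $[m,m']\cup bF_1$ still carries $b$ as a translation on the quotient graph $U\backslash S^{abs}$. Such a $U$ should exist using residual finiteness together with Remark \ref{completion}(i). In that quotient an element of $g\Pi_m\setminus\langle b\rangle$ would be forced to coincide with a power $b^k$ acting on a finite cycle, and I would aim to derive a contradiction from this. With (c) in hand, (d) follows at once: if $\beta L_b$ met $S^{abs}$ at a point $\beta w$, that point would have to lie in $L_b$, giving $\beta\in\langle b\rangle$. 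Together with the decomposition above, this proves that the $\beta_\lambda L_b$ are pairwise disjoint abstract components, which is (e), and that $\overline{L_b}\cap S^{abs}=L_b$.

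Minimality in (b) follows from Remark \ref{completion}(iii). The minimal $\overline{\langle b^n\rangle}$-invariant subtree $T$ is contained in $\overline{L_b}=\overline{\langle b^n\rangle}F_n$ and is nonempty, so it contains some $\beta f$. Translating by $\overline{\langle b^n\rangle}$ and using connectivity of $T$ together with the fact that $b^n$ acts without fixed points, $T$ must contain a whole fundamental segment, and therefore $T=\overline{L_b}$. For (f), an element $x\in\Pi^{abs}$ preserves $L_b$ if and only if it preserves $\overline{L_b}$: one direction is by taking closures, and the other uses $\overline{L_b}\cap S^{abs}=L_b$ from (b). The stabilizer of the closed set $\overline{L_b}$ in $\widehat\Pi$ is a closed subgroup, and $N$ is its intersection with $\Pi$, so $N$ is closed in the profinite topology of $\Pi^{abs}$.
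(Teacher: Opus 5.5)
The paper gives no proof of this lemma (it is quoted from Proposition 8.3.1 of Ribes' book), so your proposal has to stand on its own. It has a genuine gap exactly at the step you flag, and that step carries (c), injectivity in (a), (d), the disjointness in (e), and $\overline{L_b}\cap S^{abs}=L_b$.

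Your reduction is correct up to $\beta\in g\Pi_m\cap\overline{\langle b\rangle}$. With finite edge stabilizers this puts $\beta$ in $\overline{\langle b\rangle}\cap\Pi^{abs}$, and what remains is exactly $\overline{\langle b\rangle}\cap\Pi^{abs}=\langle b\rangle$. This cannot be bypassed, because (c) itself implies it: every $x\in\overline{\langle b\rangle}\cap\Pi^{abs}$ commutes with $b$ (the closure of an abelian subgroup is abelian), hence $xL_b=L_b$.

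Your sketch does not prove this. From $x\in b^kU$ you only learn that $x$ and $b^k$ act alike on $U\backslash S^{abs}$, i.e.\ $m'\in Ub^km$. A contradiction needs a $U$ with $m'\notin U\langle b\rangle m$, i.e.\ $g\notin U\langle b\rangle\Pi_m$. That is the same separability statement again, and residual finiteness plus Remark \ref{completion}(i) does not supply such a $U$.

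Note also that finiteness of edge groups is not among the hypotheses of Remark \ref{completion}. Without it $\overline{\Pi_m}$ is infinite and your reduction to a finite set fails. This is harmless for the paper's application, but the restriction should be stated.

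In the finite-edge case the missing step can be supplied as follows.
\begin{itemize}
\item Take a normal subgroup $N$ of finite index in $\Pi^{abs}$ meeting all edge groups trivially, and $M\geq 1$ with $b^M\in N$.
\item Since $\overline{\langle b\rangle}=\bigcup_{j<M}b^j\,\overline{\langle b^M\rangle}$ and $\overline{\langle b^M\rangle}\cap\Pi^{abs}\subseteq\overline{N}\cap\Pi^{abs}=N$, it suffices to show $\overline{\langle b^M\rangle}\cap N=\langle b^M\rangle$.
\item The centralizer $C_N(b^M)$ preserves $L_b$ and acts on it by translations. The kernel lies in an edge stabilizer of $N$, hence is trivial. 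So $C_N(b^M)=\langle c\rangle$ is infinite cyclic with $b^M=c^k$, and $\overline{\langle b^M\rangle}\cap N\subseteq\overline{\langle c^k\rangle}\cap\langle c\rangle$.
\item It therefore suffices to know $\overline{\langle c\rangle}\cong\widehat{\mathbb{Z}}$, since then $\overline{\langle c^k\rangle}\cap\langle c\rangle=\langle c^k\rangle$.
\item This holds because, by Bass--Serre theory, $N$ is a free product of finitely many residually finite vertex stabilizers and a free group. Map the vertex factors onto finite groups injectively on the syllables of a cyclically reduced form of $c$. This gives a virtually free quotient in which $c$ still has infinite order; there cyclic subgroups are separable, so the closure of the image of $c$ is $\widehat{\mathbb{Z}}$, and $\overline{\langle c\rangle}$ surjects onto it.
\end{itemize}

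Two smaller repairs also use the commutativity of $\overline{\langle b\rangle}$.
\begin{itemize}
\item The passage from vertices to edges in (c) should use $\beta[w,bw]=[\beta w,b\beta w]\subseteq L_b$, not merely that vertices are endpoints of edges.
\item In (d), your claim that $\beta w\in S^{abs}$ forces $\beta w\in L_b$ presupposes $\overline{L_b}\cap S^{abs}=L_b$, which you then derive from (d). Instead, observe that the abstract geodesic $[\beta w,b\beta w]=\beta[w,bw]$ always contains an edge of $L_b$, so (c) applies.
\end{itemize}

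The remaining parts are fine: the identity $\overline{L_b}=\overline{\langle b^n\rangle}F_n$, surjectivity in (a), the covering in (e), minimality in (b), and (f).
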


 \section{Proof of Theorem \ref{thm:principal}}
\begin{thm}\label{Edges finite} 
A fundamental  group of a finite graph of  conjugacy separable groups  with  finite edges groups is conjugacy separable.
\end{thm}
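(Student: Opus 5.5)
We use criterion (iii) of the Proposition characterising conjugacy separability, together with the profinite Bass--Serre theory recalled in Remark \ref{completion} and Lemma \ref{Pro831Ribes}. Let $\Pi=\pi_1(\G,\Delta)$ with conjugacy separable vertex groups $\G(v)$ and finite edge groups $\G(e)$.

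\textbf{Preliminary facts.} $\Pi$ is residually finite. Indeed, residually finite groups amalgamated or HNN-extended over finite subgroups are residually finite: choose finite index normal subgroups of the vertex groups meeting the finite edge groups trivially and map $\Pi$ onto a virtually free fundamental group of a graph of finite groups. By the same device, each finite edge group is closed, and each vertex group $\G(v)$ is separable in $\Pi$ and carries its full profinite topology. Hence, by Remark \ref{completion}(i), $S(\Pi)$ embeds in $S(\widehat\Pi)$ with the same quotient $\Delta$. Moreover $\overline{\G(v)}=\widehat{\G(v)}$ and $\overline{\G(e)}=\G(e)$.

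Now let $x,y\in\Pi$ with $y=x^\gamma$ for some $\gamma\in\widehat\Pi$. We split into two cases according to whether $x$ is elliptic or hyperbolic on $S(\Pi)$. Ellipticity is detected in the profinite completion: a hyperbolic element of $\Pi$ remains fixed-point free on $S(\widehat\Pi)$ by Lemma \ref{Pro831Ribes}(b), since its minimal invariant subtree is $\overline{L_b}$. Consequently $x$ and $y$ are of the same type.

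\textbf{Elliptic case.} After conjugating in $\Pi$, assume $x\in\G(v)$ and $y\in\G(w)$. Then $\gamma$ maps the fixed vertex $w$ of $y$ to $\gamma w$, which is fixed by $x$. If $x$ fixes only the vertex $v$ in $S(\widehat\Pi)$, then $\gamma w=v$, so $v=w$ and $\gamma\in\widehat{\G(v)}$. Conjugacy separability of $\G(v)$ then gives $g\in\G(v)$ with $y=x^g$. Otherwise $x$ fixes an edge, so it lies in a conjugate of a finite edge group. In that case we walk along the geodesic $[v,\gamma w]$ in the profinite tree. Since all edge groups are finite and there are finitely many $\Pi$-orbits of edges, the stabiliser data along this path lie in finitely many finite conjugacy classes of subgroups. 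This lets us replace $\gamma$ by $\gamma\delta$ with $\delta\in\widehat{\G(w)}$ so that the problem reduces to conjugacy of elements of finite subgroups. Conjugacy of finite subsets of $\Pi$ is detected in finite quotients, because a residually finite group has only finitely many ways to embed a fixed finite group up to the relevant closed conditions. I expect this step, controlling $\gamma$ when $x$ fixes an infinite profinite subtree, to be the main technical obstacle. The first attempt would be to note that the fixed subtree of a finite-order element is a profinite subtree whose vertices are $\widehat\Pi$-translates of finitely many vertices, and to choose $\gamma w$ at minimal distance from $v$.

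\textbf{Hyperbolic case.} Let $x$ be hyperbolic with Tits line $L_x$; then $y$ has axis $L_y$. The relation $y=x^\gamma$ gives $\gamma^{-1}\overline{L_x}=\overline{L_y}$ as minimal invariant subtrees, by Lemma \ref{Pro831Ribes}(b). By Lemma \ref{Pro831Ribes}(e), $\gamma^{-1}L_x=\beta L_y$ for some $\beta\in\overline{\langle y\rangle}$. Since $\beta$ commutes with $y$, replacing $\gamma$ by $\gamma\beta$ we may assume $\gamma L_y=L_x$. Now pick a vertex $u$ of $L_y$. Since $\Pi u$ and $\widehat\Pi u$ meet $L_x$ in $\langle x\rangle$-translates of finitely many vertices (as $L_x/\langle x\rangle$ is finite), after multiplying $\gamma$ by a power of $x$ we get $\gamma u=hu$ with $h\in\Pi$, so $\gamma\in h\overline{\mathrm{Stab}(u)}$. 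Replacing $y$ by $y^{h^{-1}}$ we may assume $\gamma$ fixes $u$ and $L_y=L_x=L$. Then $\gamma$ preserves $L$ and fixes $u$, hence fixes a full fundamental segment of $L$, so it lies in the closure of a finite intersection of edge stabilisers. Since edge groups are finite, this closure equals the finite group itself and lies in $\Pi$. Therefore $\gamma\in\Pi$, which finishes this case.
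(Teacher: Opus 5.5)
\textbf{The gap: the finite-order elliptic case.} The gap is in the elliptic case with $\gamma w\neq v$, i.e.\ when $x$ has finite order and fixes an edge of $S(\widehat\Pi)$. There you only sketch heuristics, and they do not work.

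First, a geodesic $[v,\gamma w]$ in a profinite tree is just the smallest profinite subtree containing the two vertices, and it need not be a finite path. For example, take $\Pi=\mathbb{Z}/2*\mathbb{Z}/2$. Then $S(\widehat\Pi)=\overline{L}$, where $L=S(\Pi)$ is the axis of a hyperbolic element, and by Lemma \ref{Pro831Ribes}(e) vertices in distinct abstract components $\beta_\lambda L$ are joined by no finite path. Also $\gamma w$ need not lie in $S(\Pi)$. So there is nothing to ``walk along'' and no ``minimal distance'' to exploit.

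Second, your justification that conjugacy of torsion elements is detected in finite quotients because $\Pi$ is residually finite is false. This is exactly where conjugacy separability of the vertex groups has to be used. In $A*_CB$ with $C$ finite, suppose $x\in A$ is $\widehat A$-conjugate but not $A$-conjugate into $C$. Then $x$ fixes a unique vertex of $S(\Pi)$, so it is not $\Pi$-conjugate into $C$, although it is $\widehat\Pi$-conjugate into $C$. Your argument never invokes the hypothesis on the vertex groups in this case, so as it stands it cannot close it.

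The paper instead makes a concrete reduction. Take a finite-index normal subgroup $N\trianglelefteq\Pi$ meeting all edge groups trivially, and write $\gamma=g\gamma_0$ with $g\in\Pi$ and $\gamma_0\in\widehat N$. After replacing $x$ by $x^g$ one has $y\in xN$. One then works in the finite-index subgroup $N\rtimes\langle x\rangle$, which splits with finite cyclic edge groups. A collapsing argument (Lemma 2.3(iii) of [R-Z]) then shows that $x$ and $y$ are conjugate into a common vertex group.

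\textbf{The hyperbolic case.} This follows the paper's strategy: use Lemma \ref{Pro831Ribes}(b),(e) to arrange $\gamma L_y=L_x$ after multiplying $\gamma$ by an element of $\overline{\langle y\rangle}$. However, the ending is wrong as written. After your normalisation the conjugator fixes a vertex $u$ common to the two axes, but nothing forces the axes to coincide. Moreover, an element fixing $u$ and preserving a line may flip it about $u$, so ``fixes a full fundamental segment'' does not follow. Working with a vertex is the wrong choice, since vertex stabilisers are infinite.

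Use an edge instead. Once $\gamma L_y=L_x$, take an edge $e$ of $L_y$. Then $\gamma e\in L_x\subset S(\Pi)$, so by Remark \ref{completion}(i) $\gamma e=he$ for some $h\in\Pi$. Hence $h^{-1}\gamma\in\widehat\Pi_e=\overline{\Pi_e}=\Pi_e$, and so $\gamma\in\Pi$. This is precisely the paper's argument.
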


\begin{proof}
Let  $G=\pi_1(\mathcal{G},\Gamma)$ be a fundamental  group of a graph of groups  $(\mathcal{G},\Gamma)$, where the vertex groups $G_v$ are  conjugacy separable groups and  $G_e$ is finite.

Consider the action of $G$ on the Bass-Serre tree $S$ associated with this splitting and the continuous action of $\widehat{G}= \Pi_1(\mathcal{G},\Gamma)$ on the profinite tree $S(\widehat{G})$ (see \ref{completion}).

\medskip
Let $g_1, g_2$ be elements of $G$ such that $g_1^{\gamma}=g_2$ for some $\gamma$ in $\widehat{G}$.

\bigskip
 \noindent {\it Case $ 1 $} (non-hyperbolic).
Suppose   $g_1$ is conjugate to an element of
$G_v$ in $G$, and so we can assume that $g_1\in G_v$. Since $g_2$ is conjugate to the element  $g_1$  in ${\widehat G}$,  by \cite[Lemma 2.8]{R-Z} or   \cite[Lemma 8.3.2]{R-17}  $g_2$ is conjugate in $\widehat{G}$ to a vertex group $\widehat{G}_w\cap G= G_w$ and so we may assume that $g_2\in G_w$.  If $\gamma\in {\widehat G}_v$, then  $g_2\in \widehat{G}_v\cap G= G_v$ and since $G_v$ is conjugacy separable, there exists $g\in G_v$ such that $g_1^g=g_2$ and we are done. 
\medskip     

Otherwise $g_1$ and $g_2$ are conjugate in $\widehat{G}$ to elements of some edge group  by Theorem 7.1.4 in \cite{R-17} and so  $g_1$ and $g_2$ are of finite order.
\medskip

Let $N$ be a finite index normal subgroup of $G$ that intersect all edge groups trivially. Since 
$\widehat{G}= G\widehat{N}$, $\gamma= g\gamma_0$, where $\gamma_0\in \widehat{N}$, $g\in G$; so replacing $g_1$ by $g_1^{g}$ and $\gamma$ by $\gamma_0$ we may assume that $\gamma\in \widehat{N}$.  Since $\gamma\in\widehat{N}$ and $g_1\in \langle g_1\rangle \subseteq \widehat{N} \langle g_1\rangle$ then $g_2\in \widehat{N} \langle g_1\rangle \cap G= N \langle g_1\rangle$. Hence we can reduce our considerations to the case where  $G= N\rtimes \langle g_1\rangle$ and so $G$ is now the fundamental group of a gaph of groups $(\mathcal{G},\Gamma)$ with finite cyclic edge groups.

We show now that $g_1$ and $g_2$ are conjugate to the same vertex group.  If not then there exist maximal disjoint subgraphs $\Gamma_1$ and $\Gamma_2$ of $\Gamma$ such that $g_1$ is conjugate but not $g_2$ into $\pi_1({\cal{G}}, \Gamma_1)$  and $g_2$ is conjugate into $\pi_1({\cal{G}}, \Gamma_2)$ but not $g_1$. Then colapsing $\Gamma_1$ and $\Gamma_2$ and putting $\pi_1({\cal{G}}, \Gamma_1)$  and $\pi_1({\cal{G}}, \Gamma_2)$  on top of obtained new vertices $v_1$ and $v_2$ we get from \cite[ Lemma 2.3 (iii)]{R-Z} that $g_1$  and $g_2$ are not conjugate in $\widehat{G}$, a contradiction. Thus  $g_1$ and $g_2$ are  conjugate into some vertex group $G_v$ and so we may assume that $g_1, g_2\in G_v$.  Since $\widehat{N}$ intersects $G_v$ trivially and $g_1^{\gamma}= g_2$ we deduce that $g_1= g_2$ as required.
\bigskip

 \noindent {\it Case $ 2 $}.
 Suppose $g_1$
does not stabilize any vertex of $S(G)$ and therefore by \cite[Lemma 2.8]{R-Z}  or in \cite[ Lemma 8.3.2]{R-17} $g_2$ does not stabilize any vertex as well.

 By   \cite[Proposition 3.4]{Serre} there exists the infinite straight lines $T_{1}, T_2$ on which $g_1, g_2$  acts freely.  Since  $\overline{T_i}$  are the unique minimal $g_i$-invariant subtree
of $S(\widehat G)$ by Lemma \ref{Pro831Ribes} (b),  $C_{\widehat G} (g_i)$
acts naturally on  $\overline{T_i}$, and  $\gamma^{-1}\overline{T_1}= \overline{T_2}$.
 
 Since by Lemma \ref{Pro831Ribes} (a) $T_2/\langle g_2\rangle = \overline{T_2}/\overline{\langle g_2\rangle} $ is finite, there exist $\delta\in \overline{\langle g_2\rangle}$ such that $\delta\gamma^{-1} e\in T_2$, where $e\in T_1$. Since edges stabilizes  are finite  using Remark \eqref{completion} (i) we deduce that $g= \delta\gamma^{-1}\in G$, and so $\gamma= g^{-1}\delta$. Therefore $g_1^{\gamma}= g_1^{ g^{-1}\delta}= g_1^{g^{-1}}= g_2$ because $\delta$ centralizes $g_2$.  This finishes the proof in this case. 
\end{proof}

\begin{rem}
	As was observed by Ashot Minasyan the hypothesis of finiteness of graph of groups in Theorem \ref{thm:principal} is essential. Indeed, the alternating  group $A_{\infty}$ of permutations (with finite support) of natural numbers $\mathbb{N}$ is simple and so is not residually finite, in particular, is not conjugacy separable. However, it is an inductive limit (union)  of $\varinjlim_{n\in \N}A_n$ and so can be viewed as the fundamental group of   an infinite tree of groups whose edge group are $A_n$ and the incident vertex groups are $A_n$ and $A_{n+1}$.
	\end{rem}

 \end{document}